\documentclass[a4paper,12pt]{amsart}
\input{preamble.tex}

\title{AKE principles for deeply ramified fields}
\author{Franziska Jahnke and Jonas van der Schaaf}
\thanks{The research leading to this work was 
supported by the Deutsche Forschungsgemeinschaft (DFG, German Research Foundation)
via the ANR-DFG grant AKE Pact (project number 545528554)
and under Germany's Excellence Strategy EXC 2044/2 -390685587, Mathematics M\"unster: Dynamics-Geometry-Structure. The results of the article are part of the second author's MSc thesis
supervised by the first author.}

\begin{document}

\begin{abstract}
    We study the model theory of deeply ramified fields of positive 
    characteristic. Generalizing the 
    perfect case treated in work by Jahnke and Kartas on the model theory
    of perfectoid fields, we obtain Ax--Kochen/Ershov 
    principles for certain deeply ramified fields of positive characteristic and fixed degree of imperfection. Our results apply in particular to all
    deeply ramified henselian valued fields of rank $1$.
\end{abstract}
	\maketitle
    

\section{Introduction}
A fundamental aim in mathematics is to understand complicated objects via
simpler invariants. In the model theory of valued fields, this usually takes the
form of an Ax--Kochen/Ershov principle, after Ax and Kochen \cite{AxKo} and,
independently, Ershov \cite{Er65} who proved that the theory of a henselian
valued field of residue characteristic $0$ (in the language
$\mathcal{L}_\mathrm{val}$ of valued fields)\footnote{These notations will be
    explained below.} is entirely determined by the theories of its
residue field (in the language $\mathcal{L}_\mathrm{ring}$ of rings) and the
theory of its value group (in $\mathcal{L}_\mathrm{oag}$ of ordered abelian
groups). Analogous results hold for unramified henselian valued fields
(\cite{AxKo}, \cite{AJ19}), algebraically maximal Kaplansky valued fields
(\cite{ZiePhD} and \cite{Er66}, or \cite{ErBook} for an English version of the latter), 
tame valued fields
of positive characteristic (\cite{Kuh16}), and separably tame valued fields of a
fixed degree of imperfection (\cite{KP}, \cite{A25}).

The typical way to prove an AKE principle is via an embedding lemma, which
allows one to prolong given embeddings of value group and residue field to an
embedding of valued fields (cf.~\cite[\S 4.6]{PD}, \cite{Dri14}, \cite{Kuh16}). 
An embedding lemma may also
yield quantifier elimination down to residue field and value group, cf.~\cite{Pas89} - but note that no relative quantifier result is known for tame fields
of positive characteristic. Moreover, an embedding lemma usually
implies a variety of AKE principles, most prominently:
\[
    \textbf{AKE}_\prec : (K,v)\prec (L,w) \Longleftrightarrow Kv\prec Lw \wedge vK\prec wL,
\]
and
\[
    \textbf{AKE}_{\prec_\exists} : (K,v)\prec (L,w) \Longleftrightarrow Kv\prec_{\exists} Lw \wedge vK\prec_{\exists} wL,
\]
Both of these principles hold in all the examples discussed
above. Sometimes an embedding lemma only holds over a common subfield with
additional side conditions on the embeddings. This happens in particular in
the case of tame fields of mixed characteristic, where then only the
$\textbf{AKE}_\prec$ and $\textbf{AKE}_{\prec_\exists}$ principles hold but
the regular AKE principle $\textbf{AKE}_{\equiv}$ fails, even under natural further assumptions (see both \cite{ketelsenCompositionAxKochenErshov2026} and its appendix by Dittmann).

When a valued field $(K,v)$ admits a proper immediate algebraic extension $(L,w)$,
i.e., a proper algebraic extension such that the induced embeddings of 
value group and residue field are the identity, both $\textbf{AKE}_\prec$ and
$\textbf{AKE}_{\prec_\exists}$ must fail for this pair. Nonetheless, in the case
of perfectoid fields with pseudouniformizer $\varpi$ (and more generally for
certain elementary classes of perfect deeply ramified henselian valued fields
$(K,v)$ with an distuingished element $\varpi \in \mathfrak{M}_v \setminus
    \{0\}$), Jahnke and Kartas \cite{JK23} prove AKE principles down to the pointed
value group $(vK, v\varpi)$ and residue ring $\mathcal{O}_v/(\varpi)$. In mixed
characteristic, these only work over a base field (essentially due to the
failure of $\textbf{AKE}_\equiv$ for tame fields of mixed characteristic). In
positive characteristic, Jahnke and Kartas prove versions of all three AKE
principles.

The aim of this work is now to remove the perfectness assumption in the AKE
principles shown in \cite{JK23}, which of course means we are necessarily
working in positive characteristic. More precisely, we show first that the
relevant class of valued fields is elementary:

\begin{thm*}[see \Cref{cor:elem}]
    Fix a prime \(p\) and \(e\in\N \cup\{\infty\}\). The class \(\C_{p,e}\) of pointed valued
    fields \((K,v,t)\) satisfying the following properties is elementary:
    \begin{enumerate}[(i)]
        \item \(K\) has characteristic \(p\) and degree of imperfection
              \(e\),
        \item \((K,v)\) is deeply ramified,
        \item \(0<vt<\infty\) and the valuation on $K$ with valuation ring
        \(\O_{v}[1/t]\) is separably tame.
    \end{enumerate}
\end{thm*}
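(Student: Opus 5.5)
We show that each of (i)--(iii) is expressible by a set of first-order sentences in the language \(\mathcal{L}_\mathrm{val}\) expanded by a constant for \(t\).

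\textbf{Condition (i).} Characteristic \(p\) is a single sentence. Degree of imperfection \(e\) is also elementary, and the standard sentences are as follows.
\begin{itemize}
\item For finite \(e\), one says that some \(e\) elements are \(p\)-independent and that every \(e+1\) elements are \(p\)-dependent. Here \(p\)-independence of \(b_1,\dots,b_n\) means that the \(p^n\) monomials \(b^{I}\) with \(0\le I_j<p\) are linearly independent over \(K^p\); this quantifies over coefficients \(c_I^p\), so it is first order.
\item For \(e=\infty\), one takes the infinite scheme "there exist \(n\) \(p\)-independent elements" for all \(n\).
\end{itemize}

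\textbf{Condition (iii).} The statement \(0<vt<\infty\) is a single sentence. The coarsening \(w\) with \(\mathcal{O}_w=\mathcal{O}_v[1/t]\) is uniformly definable: \(x\in\mathcal{O}_w\) iff \(xt^n\in\mathcal{O}_v\) for some \(n\). As written this is infinitary, so instead I would use the identity
\[
\mathcal{O}_w=\{x : v(x)\ge \gamma \text{ for some } \gamma \text{ in the convex subgroup generated by } vt\}.
\]
Equivalently, \(\mathfrak{M}_w=\{x : v(x)>n\,vt \text{ for all } n\}\), and \(\mathcal{O}_w\) is the set of \(x\) with \(x\mathfrak{M}_w\subseteq\mathfrak{M}_w\). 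That is still infinitary. The cleaner route is to avoid defining \(w\) at all and express its properties as schemes.
\begin{itemize}
\item Separably tame means: henselian, residue characteristic \(p\) (or \(0\)), value group \(p\)-divisible, residue field perfect, and separably defectless (separably algebraically maximal).
\item Each of these can be written as a scheme of \(\mathcal{L}_\mathrm{val}(t)\)-sentences. For instance, "\(w\) is henselian" holds iff for each degree \(n\), every monic polynomial over \(\mathcal{O}_w\) with a simple root mod \(\mathfrak{M}_w\) has a root.
\item Membership in \(\mathcal{O}_w\) can be handled uniformly via the equivalence "\(x\in\mathcal{O}_w\iff x\in\mathcal{O}_v[1/t]\)". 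In an \(\aleph_1\)-saturated model, however, a type-definable \(\mathfrak{M}_w\) does not give a definable condition, so I would not rely on it.
\end{itemize}

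I would therefore follow the strategy of \cite{JK23} and reformulate (iii) via \(v\) and \(t\) directly:
\begin{itemize}
\item henselianity of \(w\) follows from henselianity of \(v\);
\item \(p\)-divisibility of \(wK=vK/\Delta\) and perfection of \(Kw\) follow from deep ramification, namely surjectivity of Frobenius on \(\mathcal{O}_v/(t)\) and the absence of a maximal element in \(v\) on \((0,vt]\);
\item the remaining key property, separable algebraic maximality of \(w\), is rephrased by Kuhlmann's criterion: every separable Artin--Schreier polynomial \(X^p-X-c\) has a root approximated as well as possible, i.e. for all \(c\) there is \(d\) with \(v(c-(d^p-d))\ge 0\) modulo the relevant coarsening.
\end{itemize}

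\textbf{Condition (ii).} Deep ramification is elementary in the standard way: \(vK\) has no smallest positive element when restricted below \(vt\) (equivalently, \(v(t)\) is not an isolated \(v\)-value of a uniformizer), Frobenius is surjective on \(\mathcal{O}_v/(t)\), and \(v\) is henselian. The last two are single sentences and a scheme respectively.

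\textbf{Main obstacle.} The main difficulty is encoding "\(w\) separably defectless" without quantifying over the non-definable ring \(\mathcal{O}_v[1/t]\). I would first try to prove a lemma, in the spirit of the perfect case, that under (i), (ii) and henselianity this is equivalent to the first-order condition "every Artin--Schreier extension of \(K\) is \(w\)-defectless", expressed with \(v\)-inequalities involving only integer multiples of \(vt\).
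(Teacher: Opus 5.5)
\textbf{There is a genuine gap in (iii).} The lemma you propose at the end is false, so the plan cannot be completed as stated. For henselian deeply ramified $(K,v,t)$, ``every Artin--Schreier extension of $K$ is $v_{t^+}$-defectless'' does not imply that $v_{t^+}$ is separable-algebraically maximal.

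\Cref{ex:elem-of-class} already shows the phenomenon in rank $1$. For $p$ odd, $\mathbb{F}_p((t))^{\mathrm{AS}}$ has no Artin--Schreier extensions at all. Yet the non-Galois degree-$p$ subfield of the dihedral extension generated by $\sqrt{t}$ and a root of $X^p-X-1/\sqrt{t}$ is a separable defect extension.

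One level up, this refutes your lemma. Let $F$ be a perfect henselian field with a non-trivial rank-$1$ valuation, and let $t\in F$ have positive value. Put $K=F((s))^{\mathrm{AS}}$, and let $v$ be the composite of the $s$-adic valuation with the unique extension of the valuation of $F$ to its residue field. Then $(K,v,t)$ is henselian and has degree of imperfection $1$. It is also deeply ramified: for $a\in\mathcal{O}_v$ and $vy>0$, a root $\rho$ of $X^p-X-a/y^p$ gives $y\rho\in\mathcal{O}_v$ with $v((y\rho)^p-a)\geq (p-1)vy$. Here $v_{t^+}$ is the $s$-adic valuation, and your condition holds vacuously. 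But the same dihedral construction with $s$ in place of $t$ gives a separable $v_{t^+}$-defect extension of degree $p$. A correct criterion must therefore detect non-Galois extensions, i.e.\ quantify over all finite separable extensions of $K$. This is harmless degree by degree, since these extensions and their valuation rings are uniformly interpretable in a henselian field.

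\textbf{The obstacle you name is also never resolved.} The condition ``$v_{t^+}(c-(d^p-d))\geq 0$'' is again the infinitary disjunction over $n$ of $v(c-(d^p-d))\geq -n\,vt$. The missing idea is \Cref{prop:elem-prop}: for deeply ramified henselian $(K,v,t)$, $v_{t^+}$ is separably defectless if and only if every finite separable $K'/K$ with $e(v'/v)=1$ has a generator $\alpha$ with $0\leq v(\delta(\alpha))\leq vt$.

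A priori, ``$\delta(\alpha)$ is a $v_{t^+}$-unit'' only says $v(\delta(\alpha))\leq n\,vt$ for some $n$. Deep ramification lets one replace $\alpha$ by a root $\beta\in K(\alpha)$ of a $K$-polynomial approximating the minimal polynomial of $\alpha^{1/p^N}$. This divides the value of the different by $p^N$ (\Cref{lem:small-diff-approx}), so the single bound $vt$ suffices. Conversely, such a generator makes the extension unramified for $v_{t^+}$ (\Cref{lem:unramif}). Since $vK$ is $p$-divisible, every $v_{t^+}$-immediate extension has $e(v'/v)=1$, and so it is trivial.

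\textbf{A smaller point: your axiomatization of (ii) is wrong in higher rank.} Semi-perfectness of $\mathcal{O}_v/(t)$ plus density of $vK$ does not give deep ramification. For $F,t$ as above, $F((s))$ with the composite valuation satisfies both, but its completion $F((s))$ is not perfect. So you cannot derive $p$-divisibility of $v_{t^+}K$ from these conditions. Use instead the single sentence of \Cref{def:deeply-ramified}(iv): $\mathcal{O}_v/(x)$ is semi-perfect for every $x$ with $vx>0$.
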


This class contains in particular all deeply ramified fields of rank $1$, where
$t$ may be chosen as any element in $\mathfrak{M}_v\setminus \{0\}$. We also
give an example of a deeply ramified field that is not contained in this class.
We then use a similar proof technique to \cite{JK23} to show the following main
theorem:

\begin{thm*}[see \Cref{thm:ake}]
    Fix a prime \(p\) and \(e\in\N \cup \{\infty\}\). Let \((K,v,t)\subseteq(L,w,t)\) 
    and \((K,v,t)\subseteq (L',w',t')\)
    be separable extensions of pointed valued fields in the aforementioned class.
    Furthermore, assume that
    \begin{enumerate}[(i)]
        \item \(vK\preceq_{\exists}wL\); and
        \item \(L,L'\) have the same relative degree of imperfection over \(K\):
              \([L:K^{p}]=[L':K^{p}]\)
    \end{enumerate}
    hold. Then the following
    are equivalent
    \begin{enumerate}
        \item \((L,w,t)\equiv_{(K,v,t)}(L',w',t')\);
        \item \(\O_{w}/(t)\equiv_{\O_{v}/(t)}\O_{w'}/(t')\) and
              \(wL\equiv_{vK}w'L'\).
    \end{enumerate}
\end{thm*}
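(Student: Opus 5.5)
The direction (1)$\Rightarrow$(2) is immediate. The residue ring $\mathcal{O}_w/(t)$ and the value group $wL$ are interpretable in $(L,w,t)$ uniformly, and parameters from $K$ stay parameters. So the content lies in (2)$\Rightarrow$(1). Following the strategy of \cite{JK23}, I would reduce to an AKE principle for the coarsening. Write $w_t$ for the valuation on $L$ with valuation ring $\mathcal{O}_w[1/t]$, and similarly $w'_t$ on $L'$ and $v_t$ on $K$. By condition (iii) in the definition of $\mathcal{C}_{p,e}$, these coarsenings are separably tame. Their residue fields $Lw_t$ carry the rank-one valuations $\bar w$ induced by $w$, and $t$ gives a nonzero element of the maximal ideal of $\bar w$. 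The plan is to apply the relative AKE principle for separably tame fields of fixed degree of imperfection (\cite{KP}, \cite{A25}) to $(L,w_t)$ and $(L',w'_t)$ over $(K,v_t)$. This reduces the problem to showing two things: $w_tL\equiv_{v_tK} w'_tL'$, and $(Lw_t,\bar w,t)\equiv_{(Kv_t,\bar v,t)}(L'w'_t,\bar w',t)$. We then recombine, since $(L,w,t)$ is interpretable in the pair $(L,w_t)$ enriched by $(Lw_t,\bar w)$. To make the recombination clean, I would first pass to $\aleph_1$-saturated elementary extensions (or ultrapowers over a common index set), so that everything can be phrased with back-and-forth systems rather than complete theories.

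The value group part is straightforward. $w_tL$ is the quotient of $wL$ by its convex hull of $\mathbb{Z}\,wt$, which is definable in $(wL,wt)$. Hence $wL\equiv_{vK}w'L'$ gives $w_tL\equiv_{v_tK}w'_tL'$. The separable-tame AKE also needs the degree-of-imperfection data over the base. This is where hypotheses (i) and (ii) enter. Condition (ii) ensures that $L$ and $L'$ have the same relative degree of imperfection over $K$, so a $p$-basis of $K$ extends in the same way in both. Condition (i), $vK\preceq_\exists wL$, together with separability of $L/K$, should give that $L/K$ is separable in the sense required by the relative embedding lemma, with the value group of $K$ pure in that of $L$ (and similarly for $L'$, by (2)).

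The main obstacle is the residue field step. We must show that $(Lw_t,\bar w)$ and $(L'w'_t,\bar w')$ are elementarily equivalent over $Kv_t$ as valued fields. Here the only information available is $\mathcal{O}_w/(t)\equiv\mathcal{O}_{w'}/(t')$ and the pointed value groups. These residue fields are deeply ramified of rank one (after saturation, their completions are the relevant objects). I would try to imitate the tilting/“$\mathcal{O}/(t)$ determines the field” argument of \cite{JK23}, adapted to the imperfect case. Since $Lw_t$ is henselian of rank one, the completion of $Lw_t$ is $\varprojlim \mathcal{O}_w/(t^n)$. Deep ramification (surjectivity of Frobenius on $\mathcal{O}/(t)$ modulo the degree-of-imperfection defect) should let one recover $\mathcal{O}/(t^n)$ uniformly and interpretably from $\mathcal{O}/(t)$ together with a $p$-basis. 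Concretely, I expect this to go via the isomorphism $\mathcal{O}/(t^{1/p})\cong\mathcal{O}/(t)$ induced by Frobenius on the perfect hull, with a $p$-basis correction. A henselian field is then elementarily equivalent to its completion, relative to a dense base. The imperfection causes the difficulty: Frobenius is not surjective. So one must carry a fixed $p$-basis of $K$, which remains a $p$-basis in $L$ and $L'$ by (ii) and separability, and check that the interpretation of $\mathcal{O}/(t^n)$ is uniform over these parameters. I would verify this first. If it fails, the fallback is to pass to the perfect hulls, where \cite{JK23} applies directly, and then descend using the fixed $p$-basis.
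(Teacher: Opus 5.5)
There is a genuine gap, and it sits exactly at the step you call the ``main obstacle''. You cut at $w_{t^+}$, the valuation with ring $\mathcal{O}_w[1/t]$. This leaves on $Lw_{t^+}$ a valuation $\bar w$ whose value group is $\Delta^+$, which is not rank one in general: it contains both the rank-one piece $\Delta^+/\Delta^-$ and everything below $\Delta^-$. In the rank-one case, which is the main case of interest, $w_{t^+}$ is trivial, so your reduction does nothing. Your plan for the residual piece is not an argument, for three reasons:
\begin{itemize}
\item You invoke the principle that a henselian field is elementarily equivalent to its completion over a dense base. This is not available: $\mathbb{F}_p(t)^h\preceq\mathbb{F}_p((t))$ is open, and \Cref{cor:omg} obtains statements of this shape only as \emph{consequences} of the theorem you are proving. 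So the step is circular.
\item Recovering all $\mathcal{O}_w/(t^n)$, or the completion, from $\mathcal{O}_w/(t)$ would still not give elementary equivalence of the fields.
\item The ``Frobenius with a $p$-basis correction'' is unnecessary. Deep ramification already makes $\mathcal{O}_w/(t)$ semi-perfect, by \Cref{def:deeply-ramified}(iv).
\end{itemize}
The fallback via perfect hulls leaves the descent from $L^{\mathrm{perf}}$ to $L$ completely open.

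The missing idea is that no tilting is needed, because saturation absorbs the rank-one piece into the tame part. First replace everything by $\aleph_1$-saturated ultrapowers, so that you have isomorphisms $f_r:\mathcal{O}_w/(t)\cong\mathcal{O}_{w'}/(t)$ over $\mathcal{O}_v/(t)$ and $f_g:wL\cong w'L'$ over $vK$. Then the rank-one valued field $(Lw_{t^+},\overline{w_{t^-}})$ is spherically complete with value group $\mathbb{R}$, hence defectless with $p$-divisible value group. Its residue field $Lw_{t^-}$ is perfect. Composing with the separably tame $w_{t^+}$ shows that $w_{t^-}$ is itself separably tame (cf.\ \Cref{lem:sep-tame-coarsening}); here $w_{t^-}$ is the coarsest coarsening with $w_{t^-}(t)>0$.

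So apply the relative AKE for separably tame fields at $w_{t^-}$, not at $w_{t^+}$. Use it in Anscombe's form \cite[Theorem 4.22]{A25}, which allows the residue field to be expanded by $\bar w$. At this coarsening the residue valued field is $\mathrm{Frac}((\mathcal{O}_w/(t))_{\mathrm{red}})$ with valuation ring $(\mathcal{O}_w/(t))_{\mathrm{red}}$. Hence $f_r$ directly yields $(Lw_{t^-},\bar w)\cong_{(Kv_{t^-},\bar v)}(L'w'_{t^-},\bar w')$. Likewise $f_g$ yields $w_{t^-}L\cong_{v_{t^-}K}w'_{t^-}L'$, with $w_{t^-}L/v_{t^-}K$ torsion-free; this is where hypothesis (i) enters.

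A smaller point: the convex hull of $\mathbb{Z}\,wt$ is in general not definable in $(wL,wt)$. Think of the standard part in a nonstandard model of $\mathrm{Th}(\mathbb{Z})$. So the transfer to the coarsened value groups should go through these isomorphisms in the saturated models, not through definability.
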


This leads to appropriate versions of $\textbf{AKE}_\equiv$,
$\textbf{AKE}_\prec$ and $\textbf{AKE}_{\prec_\exists}$ for each class
$\mathcal{C}_{e,p}$.

As an application, we study a variant of the major open problem whether the
embedding of $\mathbb{F}_p(t)^h = \mathbb{F}_p((t))\cap
    \mathbb{F}_p(t)^\mathrm{alg}$ into $\mathbb{F}_p((t))$ is elementary. We show
that whenever $K$ is a deeply ramified algebraic extension of
$\mathbb{F}_p(t)^h$, then the inclusion is an elementary embedding
$$K \prec K . \mathbb{F}_p((t)).$$ In particular, if we denote by
$K^\mathrm{AS}$ the Artin-Schreier hull of a field $K$, i.e., the Galois extension of 
$K$ obtained by (repeatedly) adding all roots of Artin-Schreier
polynomials to arrive at a field that admits no Artin-Schreier extensions, then
we get \Cref{cor:as-closure-nice}:
$$(\mathbb{F}_p(t)^h)^\mathrm{AS} \prec \mathbb{F}_p((t))^\mathrm{AS}.$$

\subsection*{Notation}

    In this paper, we write \(\mathcal{L}_{\text{ring}}\) for the language
    of rings \((0,1,+,-,\cdot)\), we write \(\mathcal{L}_{\text{oag}}\) for the
    language \((0,+,-,<)\) of ordered abelian groups. Finally, the language of
    valued fields \(\mathcal{L}_{\text{val}}\) is the language of rings with a
    predicate \(\O\) for the elements of the valuation ring. For a valued field $(K,v)$ and an element $t\in K$ we write $(K,v,t)$ for the $\mathcal{L}_\text{val}(t)$-structure where
    we name $t$ as a constant, and call $(K,v,t)$ a pointed valued field.

    We write \(\O_{v}\) for the valuation ring of a given valuation $v$ and
    \(\mathfrak{M}_{v}\) for its maximal ideal.

\begin{defi}
    Let \((K,v)\) be a valued field and \(t\in\mathfrak{M}_{v}\). We write
    \(v_{t^{+}}\) for the finest coarsening of \(v\) such that
    \(v_{t^{+}}(t)=0\) and \(v_{t^{-}}\) for the coarsest coarsening such that
    \(v_{t^{-}}(t)>0\). These give rise to a diagram of places where we have
    marked the associated value groups
    \[
        K \xrightarrow{vK/\Delta^{+}} Kv_{t^{+}} \xrightarrow{\Delta^{+}/\Delta^{-}} Kv_{t^{-}} \xrightarrow{\Delta^{-}} Kv.
    \]
    Here \(\Delta^{+}\) is the smallest convex subgroup of \(vK\) such that
    \(vt\in\Delta^{+}\) and \(\Delta^{-}\) the largest convex subgroup such that
    \(vt\notin\Delta^{-}\). In particular the group \(\Delta^{+}/\Delta^{-}\) is
    a totally ordered group of rank \(1\).

    Some objects associated to this are particularly easy to describe. The
    valuation ring \(\O_{v_{t^{+}}}\) can be identified with \(\O_{v}[1/t]\):
    the latter is the smallest overring (which is automatically a valuation
    ring) of \(\O_{v}\) where \(t\) is invertible. Similarly, we can describe
    \(Kv_{-}\) as the fraction field of \((\O_{v}/(t))_{\textrm{red}}\): the
    maximal ideal of \(\O_{v_{t^{-}}}\) is the smallest prime ideal containing
    \(t\). This is exactly the radical of \((t)\).
\end{defi}

For fields $K$ and $L$
contained in a common superfield $M$, we write $K.L$ for their compositum.

\subsection*{Acknowledgements}
We wish to thank Sylvy Anscombe, Philip Dittmann, and Konstantinos Kartas for their many helpful 
comments and suggestions. 

\section{Deeply ramified and almost separably tame fields}

At the core of the main theorem of this paper are two properties of valued
fields: being deeply ramified and almost separably tame. In this section, we will
exhibit an elementary class of fields with these properties and argue that this
is a reasonable class to work in.

\subsection{Separably tame valued fields}

The theory of (separably) tame valued fields is a key recent development in
the model theory of valued fields. Pioneered by Kuhlmann \cite{Kuh16} and
Kuhlmann and Pal \cite{KP}, the class of (separably) tame valued fields is one
where a wide variety of AKE principles hold for (separably) tame valued fields
of finite degree of imperfection. Recently, Anscombe expanded their work to
include the case of infinite degree of imperfection \cite{A25}. We give a brief
explanation of the model-theoretic results for separably tame valued fields as
we will need them later on.

\begin{defi}
    Let \((K,v)\) be a henselian valued field of residue characteristic \(p>0\)
    and \((L,w)/(K,v)\) a finite field extension. This extension is called tame
    if the following conditions hold:
    \begin{enumerate}[(i)]
        \item \(p\nmid(wL:vK)\);
        \item \(Lw/Kv\) is separable;
        \item the extension is defectless.
    \end{enumerate}
    A valued field is then called separably tame if all separable finite field
    extensions are tame. Trivially valued fields are always separably tame. For
    non-trivially valued fields, the property of being separably tame is
    equivalent to the following properties:
    \begin{enumerate}[(i)]
        \item \(vK\) is \(p\)-divisible;
        \item \(Kv\) is perfect;
        \item \((K,v)\) is separable-algebraically maximal.
    \end{enumerate}
\end{defi}

The main result in the theory of separably tame valued fields are their AKE
principles \cite[Theorem 1.2]{KP}. The infinite degree of
imperfection case was proven in \cite[Theorem 4.22(II) and Theorem 4.24]{A25}.

\begin{thm}[Kuhlmann for $e=0$, Kuhlmann--Pal for $e<\infty$, Anscombe for $e=\infty$]
    Let \((L,w),(L',w')\) be two separably tame valued fields of positive
    characteristic such that \(L,L'\) have the same degree of imperfection $e
        \in \mathbb{N}\cup \{\infty\}$. Then the following are equivalent
    \begin{enumerate}[(i)]
        \item \((L,w)\equiv(L',w')\);
        \item \(wL\equiv w'L'\) and \(Lw\equiv L'w'\).
    \end{enumerate}
    Moreover, assume that \((L,w)\) and \((L',w')\) share a common separably
    tame subfield \((K,v)\) with \(L/K\) and \(L'/K\) both separable extensions
    such that $wL/vK$ is torsion-free and, if $e = \infty$, assume further that
    the relative imperfection degrees of $L/K$ coincides with that of $L'/K$.
    Then we get equivalences between the following statements:
    \begin{enumerate}[(i)]
        \item \((L,w)\equiv_{(K,v)}(L',w')\);
        \item \(wL\equiv_{vK}w'L'\) and \(Lw\equiv_{Kv}L'w'\).
    \end{enumerate}
\end{thm}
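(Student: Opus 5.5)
This theorem is not proved in the paper; it is quoted from the literature. We take the first part (no base field) from \cite[Theorem 1.2]{KP} for $e<\infty$, from Kuhlmann \cite{Kuh16} for $e=0$, and from \cite[Theorems 4.22(II) and 4.24]{A25} for $e=\infty$. Our plan is therefore only to explain how the cited results assemble into the statement above. The proofs follow the classical embedding-lemma route described in the introduction.

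The backbone is a relative embedding lemma. Let $(K,v)$ be a common separably tame subfield, and let $L/K$ be separable with $wL/vK$ torsion-free. Suppose embeddings of value groups over $vK$ and of residue fields over $Kv$ are given into a sufficiently saturated separably tame target with the same imperfection degree. Then these extend to a valued field embedding of $(L,w)$ over $K$. The steps are as follows.
\begin{enumerate}[(1)]
\item Reduce to $L$ of finite transcendence degree over $K$, and handle the imperfection using a $p$-basis. Here separability of $L/K$, plus (for $e=\infty$) equality of the relative imperfection degrees, lets us map a $p$-basis of $L$ over $K$ to an analogous one in the target.
\item Realize transcendental residue and value-group generators by the standard valuation-independent elements. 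Torsion-freeness of $wL/vK$ and separability of $Lw/Kv$ are what avoid defect at this stage.
\item Pass to the separable-algebraic closure inside $L$. By separable-algebraic maximality, plus $p$-divisibility of the value group and perfection of the residue field, the remaining extension is immediate and separable-algebraic, hence trivial. Equivalently, one uses Kuhlmann's ``generalized stability'' theorem: a function field over a separably tame field that is separable with the right invariants is without defect.
\end{enumerate}

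With the embedding lemma in hand, the equivalence (i)$\Leftrightarrow$(ii) is the usual back-and-forth between saturated models. The direction (i)$\Rightarrow$(ii) is immediate because value group and residue field are interpretable. For the unrelativized version one uses $\mathbb{F}_p$ with the trivial valuation as base; this base is separably tame, and all torsion and separability hypotheses hold automatically.

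The main obstacle is step (3), the defect-elimination for immediate separable function fields. This is precisely the deep content of Kuhlmann's work. For $e=\infty$ there is the additional bookkeeping of infinite $p$-bases handled in \cite{A25}. We simply invoke these sources.
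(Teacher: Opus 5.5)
Your proposal matches the paper, which also gives no proof and simply cites \cite[Theorem 1.2]{KP} and \cite[Theorem 4.22(II) and Theorem 4.24]{A25}, with Kuhlmann \cite{Kuh16} behind the case $e=0$. One caution about your optional sketch: step (3) is not how the cited embedding lemma works, because over the henselized valuation-independent subfield the rest of $L$ is in general neither algebraic nor immediate---the algebraic part is controlled by the generalized stability theorem, and the remaining immediate transcendental part needs Kuhlmann's henselian rationality theorem together with saturation of the target---but since you defer entirely to the sources, this does not affect your argument.
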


\subsection{Deeply ramified valued fields}

Deeply ramified valued fields have been extensively studied in the context of
almost mathematics \cite{GR03} and more recently, the model theory of valued fields \cite{KR21, BOISSONNEAU_2024,
    JK23, CK25, cutkoskyComputationKahlerDifferentials2025}. Similarly, we will use them as the foundation of the main theorem.

\begin{defi}
    \label{def:deeply-ramified}
    Let \((K,v)\) be a valued field of characteristic \(p\). It is called deeply
    ramified if one of the following equivalent conditions hold:
    \begin{enumerate}[(i)]
        \item \(\Omega_{\O_{v^{\text{sep}}}/\O_{v}}=0\);
        \item the completion \(\hat{K}\) is perfect;
        \item \(K\) is dense in its perfect hull;
        \item for all \(x\in K\) with \(0<vx\), the ring \(\O_{v}/(x)\) is
              semi-perfect.
    \end{enumerate}

    \begin{proof}
        The equivalence of the first three statements is standard and can be
        found in \cite{KR21}.

        We prove (iii) \(\Leftrightarrow\) (iv).

        For the left to right implication, there is an isomorphism
        \(\O_{v}/(x)\cong\O_{\hat{v}}/(x)\). Therefore if the Frobenius map is
        surjective on \(\hat{K}\) (and hence \(\O_{\hat{v}}\)), then it is also
        surjective on \(\O_{v}/(x)\).

        For the right to left implication, consider a basic closed ball around
        \(\sqrt[p^{r}]{a}\) for \(a\in K\) and \(r\in\N\) of radius
        \(\varepsilon\):
        \[
            B=\{b\in K|v(x-\sqrt[p^{r}]{a})\geq\varepsilon\}.
        \]
        Take \(x\in K\) with \(v(x)\geq p^{r}\cdot\varepsilon\). Then
        \(\O_{v}/(x)\) is semi-perfect so there is a \(b\in K\) with
        \(v(b^{p}-a)\geq v(x)\). A simple verification shows that \(b\in B\):
        \begin{align*}
            v(b-\sqrt[p^{r}]{a}) & =\frac{1}{p^{r}}v\left(b^{p^{r}}-a\right) \\
                                 & \geq\frac{p^{r}\cdot\varepsilon}{p^{r}}   \\
                                 & =\varepsilon.
        \end{align*}
    \end{proof}
\end{defi}

\subsection{Unramified extensions}

In the literature, there are multiple different definitions of unramified
extensions. Here, unramified extensions are defectless and must induce a
separable extension of the residue field.

\begin{defi}
    A finite extension \((K',v')/(K,v)\) of henselian valued fields is
    unramified if \(f(v'/v)=[K':K]\) and \(K'v'/Kv\) is separable.
\end{defi}

There are a variety of equivalent definitions of unramified extensions. The ones
we will use are expressed using the different: a quantity associated to an
element of an algebraic field extension similar to the discriminant.

\begin{defi}
    Let \(K\) be a field and \(a\in\overline{K}\) an element of the algebraic
    closure. Write \(m_{a}(x)\) for its minimal polynomial over $K$. We write
    \(\delta(a)=m_{a}'(a)\) for the different of \(a\). Factoring \(m_{a}(x)\)
    as
    \[
        m_{a}(x)=\prod_{i}(x-a_{i})
    \]
    in $\overline{K}$ with \(a_{0}=a\), using the product rule we can also write the different as
    \[
        \delta(a)=\prod_{i>0}(a_{i}-a).
    \]
\end{defi}

\begin{lem}
    \label{lem:unramif}
    Let \((K,v)\) be a henselian valued field and \((K',v')/(K,v)\) a finite
    extension. Then the following are equivalent:
    \begin{enumerate}[(i)]
        \item The extension \((K',v')/(K,v)\) is unramified;
        \item there exists an \(a\in\O_{v'}\) such that \(\O_{v'}=\O_{v}[a]\)
              and \(\delta(a)\in\O_{v}^{\times}\);
        \item there exists an \(a\in\O_{v'}\) such that \(K'=K(a)\) and
              \(\delta(a)\in\O_{v}^{\times}\);
        \item there exists an \(f\in K[X]\) such that
              \(\discr(f)\in\O_{v}^{\times}\) and \(K'=K[X]/(f)\);
        \item \(\O_{v'}/\O_{v}\) is étale.
    \end{enumerate}

    \begin{proof}
        See \cite[Fact 2.4.7]{JK23}.
    \end{proof}
\end{lem}

\subsection{Almost separably tame valued fields}

The concept of almost separably tame valuations is one of the main tools we
use to demonstrate the AKE principle.
These allow us to split up valuations into different
relatively nice parts that we can understand separately. Under certain
conditions, we can use model theory to characterize such fields.

\begin{defi}
    \label{def:sep-tameable}
    Let \((K,v)\) be a valued field. It is called almost separably tame if it has
    an \(\aleph_{1}\)-saturated elementary extension
    \((K^{*},v^{*})\succeq(K,v)\) which has a non-trivial coarsening \(v^{*+}\)
    such that \((K,v^{*+})\) is separably tame.
\end{defi}

\begin{lem}
    \label{lem:diff-cont}
    Let \((K,v)\) be a valued field and \(a\in K^{\text{sep}}\) an element of
    its separable closure with minimal polynomial \(m_{a}(x)\). Then the
    different \(\delta(a)\) is continuous in the coefficients of \(m\).

    \begin{proof}
        Let \(a=a_{0},\ldots,a_{n}\) be the roots of \(m_{a}\). Then
        \[
            \delta(a)=m_{a}'(a)=\prod_{i>0}(a_{0}-a_{i})
        \]
        is a polynomial in the roots of \(m_{a}\). By
        \cite[Theorem 2.4.7]{EP05}, the roots of \(m_{a}\) are
        continuous in the coefficients of \(m_{a}\) and so \(\delta(a)\) is as
        well.
    \end{proof}
\end{lem}

\begin{lem}
    \label{lem:small-diff-approx}
    Let \((K,v,t)\) be a pointed deeply ramified henselian valued field of
    characteristic \(p>0\) such that \(\{v(t^{n})\;|\;n\in\N\}\) is
    cofinal in \(vK\). Then for any finite separable field extension \(L/K\)
    there is an \(\alpha\in L\) such that \(L=K(\alpha)\) and \(0\leq
    v(\delta(a))\leq v(t)\).

    \begin{proof}
        Let \(L/K\) be such an extension and take any generator \(\alpha\in
        \O_{v}\) which exists by separability. This will satisfy
        \(v(\delta\alpha)\geq0\).

        We get a diagram of valued fields
        \[
            \begin{tikzcd}
                L & L.K^{\frac{1}{p^{\infty}}} \\
                K & K^{\frac{1}{p^{\infty}}}
                \arrow[from=2-1,to=1-1]
                \arrow[from=2-1,to=2-2]
                \arrow[from=2-2,to=1-2]
                \arrow[from=1-1,to=1-2]
            \end{tikzcd}
        \]
        The extension \(K^{\frac{1}{p^{\infty}}}/K\) is immediate and purely
        inseparable, therefore it is linearly disjoint from \(L\) over \(K\).
        Therefore, we get that
        \([L.K^{\frac{1}{p^{\infty}}}:K^{\frac{1}{p^{\infty}}}]=[L:K]\). In
        particular \(f_{K}^{\alpha}\) is still irreducible over
        \(K^{\frac{1}{p^{\infty}}}\) and
        \(L.K^{\frac{1}{p^{\infty}}}=K^{\frac{1}{p^{\infty}}}(\alpha)\). By
        separability \(\delta \alpha=(f_{K}^{\alpha})'(\alpha)\neq0\) so
        \(v(\delta \alpha)<v(t^{n})\) for some \(n\in\N\).

        We claim that there is an \(\alpha'\in K^{\frac{1}{p^{\infty}}}\) such
        that \(L.K^{\frac{1}{p^{\infty}}}=K^{\frac{1}{p^{\infty}}}(\alpha')\)
        and \(\delta \alpha'=\frac{1}{p^{N}}\delta \alpha\leq v(t)\). Let
        \(N=\lceil\log_{p}(n)\rceil\). Then the \(p^{N}\)th root
        \(\sqrt[p^{N}]{\alpha}\) does the trick as
        \[
            \delta(\sqrt[p^{N}]{\alpha})=m_{\sqrt[p^{N}]{\alpha}}'(\sqrt[p^{N}]{\alpha})=\sqrt[p^N]{m_{\alpha}'(\alpha)}=\sqrt[p^N]{\delta(\alpha)}.
        \]

        Using \Cref{lem:diff-cont},
        and Krasner's lemma \cite[Theorem 4.1.7]{EP05}, we find that a
        sufficiently good monic approximation \(f\) of
        \(m_{\sqrt[p^{N}]{a}}(x)\) in \(K[X]\) has a root \(\beta\) in \(L\)
        such that we have \(K(\beta)=K(\alpha)\) and
        \[
            0\leq v(\delta(\beta))=v(\delta(\sqrt[p^{N}]{\alpha}))=\frac{1}{p^{N}}v(\delta(\alpha))\leq v(t).
        \]
    \end{proof}
\end{lem}

Analogously to \cite[Proposition 4.1.4]{JK23}, we can now prove that in deeply
ramified henselian valued fields, being almost separably tame with a parameter
\(t\in K\) is elementary by showing it is equivalent to a property which is
elementary.

\begin{prop}
    \label{prop:elem-prop}
    Let \((K,v,t)\) be a pointed deeply ramified henselian valued field of
    characteristic \(p\) with \(0<vt<\infty\). Then the following are
    equivalent:
    \begin{enumerate}[(i)]
        \item The valued field \((K,v_{t^{+}}))\) is separably defectless;
        \item For every finite separable field extension \((K',v')/(K,v)\) with
              \(e(v'/v)=1\), there is an \(\alpha\in L\) such that
              \(L=K(\alpha)\) and \(0\leq v(\delta(\alpha))\leq vt\);
        \item The valued field \((K,v_{t^{+}}))\) is separable-algebraically
              maximal.
    \end{enumerate}

    \begin{proof}
        We distinguish between the case whether \(\O_{v_{t^{+}}}\) is the
        trivial valuation ring.

        If it is the trivial valuation ring, then all three statements are
        simply true. The first and last ones are trivially true, and (ii) is
        true by \Cref{lem:small-diff-approx}.

        Now because \((K,v)\) is deeply ramified and \(\O_{v_{t^{+}}}\) is
        non-trivial the residue field \(Kv_{t^{+}}\) is perfect. We will use
        this for the rest of the proof.

        (i) \(\implies\) (ii): Let \(K'/K\) be an extension with
        \(e(v'/v)=1\). Then because of the equality\(e(v'/v)=e(v'_{t^{+}}/v_{t^{+}})\cdot e(\overline{v'}/\overline{v})\), we know that
        \(e(v'_{t^{+}}/v_{t^{+}})=1\) as well. This means that the extension is
        unramified with respect to \(v_{t^{+}}\) as \((K,v_{t^{+}})\) is
        separably defectless. We can then apply \Cref{lem:small-diff-approx}
        again\footnote{Technically speaking we can use a much weaker version of
            the lemma as \(Kv_{t^{+}}\) is already perfect and so we can replace
            $\overline{\alpha}$ by an appropriate $p^n$th root, see \cite[Lemma 4.1.2]{JK23}.} to \((Kv_{t^{+}},\overline{v})\) to get a generator
        \(\overline{\alpha}\) of the field extension \(K'v'_{t^{+}}/Kv_{t^{+}}\)
        with \(0\leq\overline{v}\delta(\overline{\alpha})\leq v\overline{t}\).
        Lifting \(\overline{\alpha}\) from the residue field to \(K'\) gives a
        desired generator.

        (ii) \(\implies\) (iii) Let \(K'/K\) be immediate with respect to
        \(v_{t^{+}}\). Then by the Lemma of Ostrowski
        \[
            [K':K]=p^{\nu}e(v'_{t^{+}}/v_{t^{+}})f(v'_{t^{+}}/v_{t^{+}})=p^{\nu}.
        \]
        This implies \(e(v'/v)\mid[K':K]=p^{\nu}\), but \(vK\) is already
        \(p\)-divisible hence \(e(v'/v)=1\). We get an \(\alpha\) as in (ii) and so by \Cref{lem:unramif} the extension is unramified and therefore
        \([K':K]=f(v'/v)=1\).

        (iii) \(\implies\) (i) The valuation ring \((K,v_{t^{+}})\) has perfect
        residue field, is separable-algebraically maximal and \(v_{t^{+}}K\) is
        \(p\)-divisible. This means that \((K,v_{t^{+}})\) is separably tame and
        a fortiori separably defectless.
    \end{proof}
\end{prop}

\begin{kor}
    \label{cor:elem}
    The class of pointed deeply ramified henselian valued fields \((K,v,t)\) of
    characteristic \(p\) such that \(\O_{v_{t^{+}}}\) is separably tame is
    elementary. We will write this class as \(\C_{p}\).

    We can divide this class into subclasses depending on the degree of
    imperfection, where we will write \(\C_{p,e}\) for fields in \(\C_{p}\) with
    (possibly infinite) degree of imperfection \(e\).

    All of these classes are elementary by (ii) of \Cref{prop:elem-prop}.
\end{kor}

By definition, a valued field \((K,v,t)\) in \(\C_{p}\) has
the property that \((K,v_{t^{+}})\) is separably tame. If \((K,v,t)\) is
\(\aleph_{1}\)-saturated, then \((K,v_{t^{-}})\) is also separably tame.

\begin{lem}
    \label{lem:sep-tame-coarsening}
    Take a pointed valued field \((K,v,t)\in\C_{p,e}\) which is
    \(\aleph_{1}\)-saturated. Then \((K,v_{t^{-}})\) is separably tame.

    \begin{proof}
        We prove that the valued field \((Kv_{t^{+}},\overline{v_{t^{-}}})\) is
        tame. Then \(v_{t^{-}}\) is the composition of a separably tame and a
        tame valuation and therefore separably tame itself.

        By \cite[Theorem 1.13]{AK16}, the valued field \((Kv_{t^{+}},\overline{v_{t^{-}}})\)
        is spherically defectless and must have value group \(\mathbb{Z}\) or \(\mathbb{R}\). Because the value group of \((K,v)\) is \(p\)-divisible, we conclude that \((Kv_{t^{+}},\overline{v_{t^{-}}})\)
        must be \(\mathbb{R}\).

        This means that \((Kv_{t^{+}},\overline{v_{t^{-}}})\) has
        \(p\)-divisible value group, perfect residue field and is defectless.
    \end{proof}
\end{lem}

If we restrict to \(e=0\), we obtain the characteristic \(p\) case of
\cite{JK23}. As pointed out before, there pointed valued fields are studied
which are also deeply ramified and henselian and almost tame, but the involved
fields are all perfect.

We now give an example of a valued field which is not separably tame, but is
contained in \(\C\).

\begin{examp}[Pointed out to us by Philip Dittmann]
    \label{ex:elem-of-class}
    Let \(p\) be an odd prime. The pointed valued field
    \((\asclose{\F_{p}\powerfield{t}},v,t)\) is in \(\C\). It is deeply ramified
    and rank \(1\), so the coarsening \(\O_{v_{t^{+}}}\) is the trivial
    valuation ring which is defectless. The field itself however is not
    separably tame.

    We claim that separable extension
    \(\mathbb{F}_{p}\powerfield{t}(\sqrt{t})\left(\rho_{\frac{1}{\sqrt{t}}}\right)\) is
    linearly disjoint from \(\asclose{\F_p\powerfield{t}}\). Then the composite extension
    \(\F_{p}\powerfield{t}(\sqrt{t})\left(\rho_{\frac{1}{\sqrt{t}}}\right).\asclose{\F_{p}\powerfield{t}}/\asclose{\F_{p}\powerfield{t}}\)
    must have defect: both the value group and residue field of
    \(\asclose{\F_{p}\powerfield{t}}\) have no degree \(p\) extension. The value
    group is \(\frac{1}{p^{\infty}}\Z\) which is \(p\)-divisible, and the
    residue field \(\asclose{\F_{p}}\) has no degree \(p\)-extensions either.

    Now to show linear independence we use some basic Galois theory. In this
    case, it is sufficient to show that
    \(\F_{p}\powerfield{t}(\sqrt{t})\left(\rho_{\frac{1}{\sqrt{t}}}\right)\cap\asclose{\F_{p}\powerfield{t}}\)
    is \(\F_{p}\powerfield{t}\). Therefore, we assume that the intersection is a
    non-trivial intermediate extension as in the diagram below.
    \[
        \begin{tikzcd}[]
            &\F_{p}\powerfield{t}(\sqrt{t})\left(\rho_{\frac{1}{\sqrt{t}}}\right).\asclose{\F_{p}\powerfield{t}} & \\
            \F_{p}\powerfield{t}(\sqrt{t})\left(\rho_{\frac{1}{\sqrt{t}}}\right) & & \asclose{\F_{p}\powerfield{t}} \\
            & L & \\
            & \F_{p}\powerfield{t}
            \arrow[from=4-2,to=3-2, no head]
            \arrow[from=3-2,to=2-1, no head]
            \arrow[from=3-2,to=2-3, no head]
            \arrow[from=2-1,to=1-2, no head]
            \arrow[from=2-3,to=1-2, no head]
        \end{tikzcd}
    \]
    The field extension \(L/\F_{p}\powerfield{t}\) must have degree \(p\):
    \(\asclose{\F_{p}\powerfield{t}}/\F_{p}\powerfield{t}\) is a tower of degree
    \(p\) extensions and \(\F_{p}\powerfield{t}(\sqrt{t})(\rho_{\frac{1}{\sqrt{t}}})/\F_{p}\powerfield{t}\)
    has degree \(2p\). This means that \(L/\F_{p}\powerfield{t}\) has degree
    \(p\). It is also a Galois extension because it is the intersection of two
    Galois extensions.

    The Galois group of this extension is non-commutative. Consider
    generators of the Galois group which act in the following manner:
    \begin{align*}
        \sigma:\sqrt{t}&\mapsto-\sqrt{t}, \\
        \tau:\rho_{\frac{1}{\sqrt{t}}}&\mapsto\rho_{\frac{1}{\sqrt{t}}}+1.
    \end{align*}
    We know that \(\sigma(\rho_{\frac{1}{\sqrt{t}}})=-\rho_{\frac{1}{\sqrt{t}}}\)
    because \(\sigma(\rho_{\frac{1}{\sqrt{t}}})\) must be an Artin-Schreier root of
    \(\sigma(\sqrt{t})=-\sqrt{t}\) and the sum of Artin-Schreier roots of \(a,b\)
    is an Artin-Schreier root of $a+b$. Therefore
    \(\sigma(\rho_{\frac{1}{\sqrt{t}}})+\rho_{\frac{1}{\sqrt{t}}}\) is an Artin-Schreier
    root of $0$ and hence
    \(\sigma(\rho_{\frac{1}{\sqrt{t}}})=-\rho_{\frac{1}{\sqrt{t}}}+n\) for some \(n\in\F_{p}\). A simple computation then gives
    \begin{align*}
        \sigma(\tau(\rho_{\frac{1}{\sqrt{t}}}))&=\sigma(\rho_{\frac{1}{\sqrt{t}}}+1)\\
                                               &=-\rho_{\frac{1}{\sqrt{t}}}+n+1 \\
        \tau(\sigma(\rho_{\frac{1}{\sqrt{t}}}))&=\tau(-\rho_{\frac{1}{\sqrt{t}}})\\
                                               &=-\rho_{\frac{1}{\sqrt{t}}}-(n+1).
    \end{align*}

    The Galois group
    \(\Gal\left(\F_{p}\powerfield{t}(\sqrt{t})(\rho_{\frac{1}{\sqrt{t}}})/\F_{p}\powerfield{t}\right)\)
    must then be the dihedral group \(D_{p}\) of order \(2p\), as it is the only
    non-commutative group of order \(2p\).
    This group has no normal subgroup of
    order \(2\), so by the fundamental theorem of Galois theory the non-trivial
    intermediate field \(L\) cannot exist.

    This demonstrates that \(\asclose{\F_{p}\powerfield{t}}\) with the
    \(t\)-adic valuation is not separably defectless.
\end{examp}

\begin{examp}
    There also exist fields which are not almost separably tame. Let \(\U\) be a
    non-principal ultrafilter on \(\N\) and \(p\) an odd prime. We claim that
    \(K=\asclose{\F_{p}\powerfield{t}^{\U}}\) is such a field with a valuation
    which we call \(v\), induced by the ultraproduct of the valuation
    on \(\F_{p}\powerfield{t}\).

    It suffices to show that for any \(x\in \mathfrak{M}_{v}\), the valuation
    \(v_{x}^{+}\) is not separably tame. To see this, suppose there is some
    non-trivial convex subgroup \(\Delta\subseteq vK\), then there is a
    \(\gamma\in vK\setminus\Delta\). Take \(x\) with \(vx=\gamma\). If \(v_{x}^{+}\) is
    not separably-tame, then neither is the coarsening corresponding to
    \(\Delta\) because the former is a coarsening of the latter. We can even
    reduce to the case that \(x\in(\F_{p}\powerfield{t})^{\U}\): the Archimedean
    components of the value groups of both \(\F_{p}\powerfield{t}^{\U}\) and its
    Artin-Schreier hull are the same.

    Now, let \((x_{i})_{i}\in\F_{p}\powerfield{t}^{\U}\) be as above. We choose a sequence
    \(y=(y_{i})_{i}\in\F_{p}\powerfield{t}^{\U}\) such that for all \(i\):
    \(vy_{i}>i\cdot v(x)\) and \(2\nmid vy_{i}\). Then the field extension
    \(K(\sqrt{y})\left(\rho_{\frac{1}{\sqrt{y}}}\right)\) gives us an example of
    separable defect.

    Because \(y\in\mathfrak{M}{v_{x}^{+}}\), by the same reasoning as before,
    the degree \(p\)-part of the extension must have separable defect: both the
    residue field \(\asclose{\F_{p}}\) and value group
    \(\frac{1}{p^{\infty}}\Z^{\U}\) have no degree \(p\) extensions.
\end{examp}

Something even stronger is true for the above valued field: no elementary
extension is almost separably tame.

\begin{examp}
    No elementary extension of \(K=\asclose{\F_{p}\powerfield{t}^{\U}}\) is
    almost separably tame.

    We may assume that the elementary extensions are ultrapowers: by the
    Keisler-Shelah theorem, for any elementarily equivalent valued field
    \((K',v')\) we can find an ultrafilter \(\U'\) on some set such that
    \((K,v)^{\U'}\cong (K',v')^{\U'}\). Then, if \((K',v')\) did contain a parameter
    \(t'\) such that \((K',v',t')\in\C_{p}\), by elementarity, the ultraproduct
    would be in \(\C_{p}\) as well.

    Take \((x_{i})_{i}\in\mathfrak{M}_{v'}\). Then for all
    \(x_{i}\in\mathfrak{M}_{v}\), by the previous example we can find a
    separable extension of degree \(p\) which has defect with respect to the
    coarsening \(v_{x_{i}}^{+}\). For a fixed (or bounded) degree, this is an
    elementary property and by Los' theorem, the element \((x_{i})_{i}\) must
    have the same property.
\end{examp}

The class \(\C_{p}\) 
is surprisingly large since being deeply ramified passes down 
from (non-trivial) coarsenings in positive characteristic, so
 many
 fields are automatically deeply ramified.

\begin{prop}
    \label{prop:tameable-ramif}
    Let \((K,v)\) be a non-trivially valued field of characteristic \(p>0\) with
    a non-trivial coarsening \(v^{+}\) which is separably tame. Then \(K\) is
    deeply ramified.

    \begin{proof}
        As separably tame valued fields are deeply ramified \cite[Theorem
            1.2]{KR21}, so is \(\O_{v^{+}}\). By \cite[Theorem 2.3.4]{EP05},
        both \(\O_{v}\) and its coarsening induce the same topology on \(K\)
        and hence \((K,v)\) is also dense in its perfect hull.
    \end{proof}
\end{prop}

It is important to note that it is very important that \(\O_{v_{t^{+}}}\) is not
trivially valued in the above proposition. If \((K,v)\) had rank \(1\) then
$v_{t^+}$ for any pseudo-uniformizer \(t\) would satisfy all other requirements
of \Cref{prop:tameable-ramif}. However, not all rank \(1\) valued fields are
deeply ramified.

Fortunately, if the coarsening is trivially valued there are still
straightforward ways to determine whether the field is deeply ramified.

\begin{lem}
    Let \((K,v)\) be a valued field. Let \(t\) be a pseudo-uniformizer such that
    \(vt\) is not minimal positive, \(v(t^{n})\) is cofinal in \(vK\) and
    \(\O_{v}/(t)\) is semi-perfect. Then \((K,v)\) is deeply ramified.

    \begin{proof}
        Because \(\O_{v}/(t)\) is semi-perfect, 
        conclude that \(\O_{v}/(t^{n})\) is semi-perfect for all \(n>0\):
        note that the coarsest coarsening $w_1$ of $v$ with $w_1(t)>0$ and 
        the coarsest coarsening $w_n$ of $v$ with $w_n(t)>0$ coincide, so
        $Kw_1 = Kw_n$. Since \cite[Lemma 7.2.19]{JK23} shows that \(\O_{v}/(t^{n})\) is semi-perfect if and only if $Kw_n$ is perfect, the semiperfectness of \(\O_{v}/(t)\)
        implies that \(\O_{v}/(t^{n})\) is semiperfect.
        As
        \(v(t^{n})\) is cofinal, for any \(x\) with \(0<vt<\infty\) there is an
        \(n>0\) such that \(v(t^{n})\geq vx\), and we get a surjection
        \(\O_{v}/(t^{n})\twoheadrightarrow\O_{v}/(x)\). This means that
        \(\O_{v}/(x)\) is also semi-perfect.
    \end{proof}
\end{lem}

\section{AKE Principles for deeply ramified fields and applications}

We now prove AKE principles for the class \(\C_{p,e}\) of pointed almost separably tame deeply ramified henselian valued fields with fixed degree of
imperfection. We start with relative subcompleteness, which serves as the
starting point for all our proofs later on:

\begin{thm} \label{thm:ake-sub}
    Let \((K,v,t)\subseteq(L,w,t),(L',w',t)\) be separable extensions of pointed
    valued fields in \(\C_{p,e}\) such that \(vK\preceq_{\exists}wL\). If $e$ is
    infinite, assume further that $L$ and $L'$ have the same relative degree of
    imperfection over $K$, i.e., $[L:L^p.K] = [L':(L')^p.K]$. Then, the following are
    equivalent
    \begin{enumerate}[(i)]
        \item \((L,w,t)\equiv_{(K,v,t)}(L',w',t)\);
        \item \(\O_{w}/(t)\equiv_{\O_{v}/(t)}\O_{w'}/(t)\) and
              \(wL\equiv_{vK}w'L'\). \end{enumerate} \label{thm:ake} Moreover,
    if $vK$ is regularly dense, the condition
    \(vK\preceq_{\exists}wL\) can be omitted. If both $wL$ and $w'L'$
    are regularly dense, the value group condition in (ii) can be
    omitted.

    \begin{proof}
        (i) $\implies$ (ii): Clear, as the residue rings $\O_{w}/(t)$ and
        $\O_{w'}/(t)$ and the value groups $wL$ and $w'L'$ are (uniformly)
        interpretable in $(L,v,t)$ resp.~$(L',v',t)$.

        (ii) $\implies$ (i): The proof follows along the same lines as
        \cite[Theorem 5.1.2]{JK23}, we give details where it deviates. By
        \Cref{cor:elem} and the Keisler-Shelah Theorem, we may replace
        \((K,v,t),(L,w,t),\textrm{and }(L',w',t)\) by non-principal ultrapowers
        (with respect to the same ultrafilter) such that we get isomorphisms of
        rings \(f_r: \O_{w}/(t)\rightarrow_{\O_{v}/(t)}\O_{w'}/(t)\) and of
        value groups \(f_g: wL\rightarrow_{vK}w'L'\). Note that, even in case $e
            =\infty$, we still have $[L:K^p] = [L':K^p]$ after passing to an
        ultrapower.

        We write $v_{t^-}$ (resp.~$w_{t^-}$ or $w'_{t^-}$) for the coarsest
        coarsening of $v$ (resp.~$w$ and $w'$) with $v_{t^-}(t)>0$, and we write
        $\overline{v}$ (resp.~$\overline{w}_{t^-}$ or $\overline{w}'_{t^-}$) for
        the valuation induced by $v$ on $Kv_{t^-}$ (resp. by $w$ on $Lw_{t^-}$
        or by $w'$ on $L'w'_{t^-}$). Exactly like in the proof of \cite[Theorem
            5.1.2]{JK23}, we have $Kv_{t^-} = \textrm{Frac}((\O_v/(t))_\mathrm{red})$
        (and similarly for $Lw_{t^-}$ and $L'w'_{t^-}$), so the semiperfectness
        of the residue rings \(\O_{v}/(t),\ \O_{w}/(t)\textrm{ and
        }\O_{w'}/(t)\) implies that  $Kv_{t^-}$, $Lw_{t^-}$ and $L'w'_{t^-}$ are
        perfect. Also, just like in \cite[Theorem 5.1.2]{JK23}, the isomorphisms
        $f_r$ and $f_g$ induce isomorphisms of valued fields $(Lw_{t^-},
            \overline{w}) \cong_{(Kv_{t^-},\overline{v})} (L'w'_{t^-},
            \overline{w}')$ and of ordered abelian groups $w_{t^-}L \cong_{v_{t^-}K}
            w'_{t^-}L'$ and $w_{t^-}L/v_{t^-}K$ is torsion-free.

        We now consider the finest coarsenings $v_{t^+}$ (resp.~$w_{t^+}$ and
        $w'_{t^+}$) for the coarsest coarsening of $v$ (resp.~$w$ and $w'$) with
        $v_{t^+}(t)=0$, whose valuation rings are obtained by adjoining
        $\frac{1}{t}$ to the valuation rings of $v$, $w$ and $w'$. As each of
        $(K,v,t), (L,w,t)$ and $(L',w',t)$ is in $\C_{p,e}$, the valuations $v_{t^+}$, $w_{t^+}$ and
        $w'_{t^+}$ are all separably tame. Moreover, the rank $1$ valuations
        induced by $v_{t^-}$ on $Kv_{t^+}$ (resp.~by $w_{t^-}$ on $Lw_{t^+}$ or
        by $w'_{t^-}$ on $L'w'_{t^+}$) have $p$-divisible value groups (as each
        of $vK$, $wL$ and $w'L'$ is p-divisible), and are defectless by
        saturation, each with perfect residue field as argued above. Thus, the
        valuations $v_{t^-}$, $w_{t^-}$ and $w'_{t^-}$ are all three also
        separably tame.

        Once again like in the proof of \cite[Theorem 5.1.2]{JK23}, we obtain
        $$(L,w_{t-}, \overline{w}) \equiv_{(K,v_{t-}, \overline{v})}
            (L',w'_{t-}, \overline{w}').$$
        By \cite[Theorem 4.22]{A25}, this implies
        $$(L,w,t)\equiv_{(K,v,t)} (L',w',t).$$ The ``moreover'' part follows
        exactly as in the proof of \cite[Theorem 5.1.2]{JK23}.
    \end{proof}
\end{thm}

\subsection*{Elementary substructures}
As an immediate consequence of \Cref{thm:ake-sub}, we obtain the following:
\begin{kor}
    \label{thm:ake-ext}
    If \((K,v,t)\subseteq(L,w,t)\) are both valued fields in \(\C_{p,e}\) such
    that \(L/K\) is separable. Then the following are equivalent:
    \begin{enumerate}
        \item \((K,v)\preceq(L,w)\),
        \item \(vK\preceq vL\) and \(\O_{v}/(t)\preceq\O_{w}/(t)\).
    \end{enumerate}
    Moreover, if both $vK$ and $wL$ are regularly dense, the value group
    condition in (ii) can be omitted.

    \begin{proof}
        Take \((K',v')=(K,v)\). Then the statement is exactly \Cref{thm:ake}.
    \end{proof}
\end{kor}
As mentioned in the introduction, the corollary above now allows us to draw
conclusions about embeddings of certain rank $1$ deeply ramified fields. 

\begin{kor}
    \label{cor:omg}
    Let \(K/\F_{p}(t)^{h}\) be an algebraic extension of valued fields with
    ramification such that the valuation ring \(\O_{v}\) of \(K\) is
    semi-perfect mod \(t\). Then we get an elementary embedding of valued fields
    \[
        K\preceq K.\F_{p}\powerfield{t}.
    \]
    \begin{proof} We claim the following:
        The inclusion of valued fields induces an isomorphism of value group and
        valuation rings mod \(t\). Furthermore, the extension is separated. Next
        we show both fields are deeply ramified and \(\O_{v_{t^{+}}}\) is trivially
        valued for both fields. Both fields are therefore contained in the same
        class \(\C_{p,e}\) and all the requirements of \Cref{thm:ake-ext} are met,
        and the embedding is elementary.

        Now we show all these claims.

        We know that \(\F_{p}(t)^{h}\subseteq\F_{p}\powerfield{t}\) is an
        immediate extension. Therefore, the extension of value groups of
        \(K.\F_{p}\powerfield{t}/K\) is trivial as well.

        Next we show that the ring extension \(\O_{K.\F_{p}\powerfield{t}}/(t)/\O_{K}/(t)\) is trivial.

        We demonstrate that \(\O_{K}/(t)=\O_{K.\F_{p}\powerfield{t}}/(t)\) first for finite
        simple extensions \(K/\F_{p}(t)^{h}\) and show we can repeat this
        process for arbitrary algebraic extension.

        Let \(\F_{p}(t)^{h}(a)\) be a simple algebraic extension. We claim that
        for all \(f\in \F_{p}\powerfield{t}(a)\), there is an
        \(\widetilde{f}\in\F_{p}(t)^{h}(a)\) such that \(v(f-f')\geq t\). The
        element \(f\) is a finite linear combination of powers of \(a\) and
        elements of \(\F_{p}\powerfield{t}\): \(f=\sum_{i=0}^{n}x_{i}f^{i}\) for
        \(x_{i}\in\F_{p}\powerfield{t}\). Polynomials are continuous, and
        therefore there is some \(\delta\) such that if
        \(v(\widetilde{x_{i}}-x_{i})\geq\delta\) and
        \(v(\widetilde{f}-f)\geq\delta\) then
        \[
            v\left(\sum_{i}x_{i}f^{i}-\sum_{i}\widetilde{x_{i}}f^{i}\right)\geq v(t).
        \]
        Because \(\F_{p}(t)^{h}\) is dense in \(\F_{p}\powerfield{t}\), we may
        take all \(\widetilde{x_{i}}\) in \(\F_{p}(t)^{h}\). This gives that
        \(\O_{K}/(t)\to\O_{K.\F_{p}\powerfield{t}}/(t)\) is a surjective map. It is
        injective by construction, so it is an isomorphism as well.

        More generally: if we want to show that some residue
        class of \(\O_{K.\F_{p}\powerfield{t}}/(t)\) is in the image of the
        inclusion, it suffices to consider a finite simple extension generated
        by an element of this residue class.

        Now we show is that the extension is separated.  We make a case distinction depending on whether 
        \(K\) is perfect. 
        If \(K\) is perfect, then \(K.\F_{p}\powerfield{t}/K\) is separable by the definition of perfect fields.
        If \(K\) is non-perfect, then \(\sqrt[p^{n}]{t}\) is a \(p\)-basis for some
        \(n\geq0\) and also in \(K.\F_{p}\powerfield{t}\).
        In either case, the extension \(K.\F_{p}\powerfield{t}/K\) is
        separated.

        By assumption \(v(t)\) is not minimal positive in \(vK\). Thus by
        \Cref{prop:tameable-ramif} \((K,v)\) is deeply ramified.

        The coarsening \(\O_{v_{t^{+}}}\) is trivially valued because \(v\)
        is a rank \(1\) valuation.

        In conclusion, we have shown all necessary requirements to apply
        \Cref{thm:ake-ext}, and hence the embedding is elementary.
    \end{proof}
\end{kor}

\begin{kor}
    \label{cor:as-closure-nice}
    The natural embedding
    \(\asclose{(\F_{p}(t)^{h})}\subseteq\asclose{(\F_{p}(\powerfield{t}))}\) is
    elementary.

    \begin{proof}
        We show that
        \(\asclose{(\F_{p}(t)^{h})}.\F_{p}\powerfield{t}=\asclose{\F_{p}\powerfield{t}}\)
        and use \Cref{cor:omg}. To show the equality, we use Krasner's lemma
        \cite[Theorem 4.1.7]{EP05}.

        Clearly the former is contained in the latter. We show the first has all
        roots of Artin-Schreier polynomials. By the same argument as in
        \Cref{cor:omg}, the inclusion
        \(\asclose{(\F_{p}(t)^{h})}\subseteq\asclose{(\F_{p}(t)^{h})}.\F_{p}(t)^{h}\)
        has dense image. Now take any
        \(f\in\asclose{(\F_{p}(t)^{h})}.\F_{p}(t)^{h}\). By Krasner's lemma it
        is sufficient to show that \(X^{p}-X-\tilde{f}\) has a root for
        \(\tilde{f}\) sufficiently close to \(f\). We can get such an
        \(\tilde{f}\) from \(\asclose{(\F_{p}(t)^{h})}\).
    \end{proof}
\end{kor}

\Cref{thm:ake-ext} also applies naturally in valued fields of higher rank:
\begin{kor}
     \label{cor:as-closure-nicer}
    The natural embedding
    \(\asclose{(\F_{p}\powerfield{t})}\subseteq\asclose{(\F_{p}(\powerfield{t}))}\powerfield{\mathbb{Q}}\) is
    elementary.
\begin{proof}
    Both $(\asclose{(\F_{p}\powerfield{t})}, v_t, t)$ and $\asclose{(\F_{p}(\powerfield{t}))}\powerfield{\mathbb{Q}}, v_t, t)$ are in $\mathcal{C}_{p,1}$.
    The induced embedding between their residue rings modulo $t$ is the identity, 
    and the induced embedding of value groups $\mathbb{Z} \subseteq \mathbb{Q} \oplus_\textrm{lex} \mathbb{Z}$ is elementary. Thus, we conclude by \Cref{thm:ake-ext}.
\end{proof}
\end{kor}

\subsection*{Elementary equivalence}
As separably tame valued fields of positive characteristic even satisfy an
$\textbf{AKE}_\equiv$ principle, we also obtain a relative completeness result
for fields in $\C_{p,e}$:

\begin{thm} Fix a prime $p$ and $e \in \mathbb{N} \cup \{\infty\}$, and let
    \((K,v,t),(L,w,s)\) be pointed valued fields in \(\C_{p,e}\). If
    \((vK,v(t))\equiv(wL,w(s))\) and \(\O_{v}/(t)\equiv\O_{w}/(s)\) then
    \((K,v)\equiv(L,w)\). \label{thm:ake-equiv}

    \begin{proof}
        The proof is analogous to that of \Cref{thm:ake} where we take the
        common substructure to be \((\F_{p},v_{\text{triv}})\) with the trivial
        valuation. Each step is similar to the corresponding one in the proof of
        \Cref{thm:ake}. At the end, we get an isomorphism of valued fields over
        \(\F_{p}\).

        The original intermediate isomorphism
        \((\O_{v'}/(t))_{\text{red}}\cong_{(\O_{v}/(t))\text{red}}(\O_{w}/(t))_{\text{red}}\)
        now becomes an isomorphism over \(\F_{p}\) instead of
        \((\O_{v}/(t))_{\text{red}}\). We can just as well apply
        \cite[Theorem 4.22]{A25} to this as well. Note that this works for
        \(\F_{p}\) in particular because it is perfect and trivially valued
        (hence tame), and therefore any extension is separable and
        \(vK/(v_{\text{triv}}\F_{p})=vK\) is torsion free.
    \end{proof}
\end{thm}

Note that in the special case of \Cref{thm:ake-equiv} in case that $K$ and $L$
are both perfect (i.e., \cite[Theorem 5.1.7]{JK23}), the conclusion can be
strengthened to \((K,v,t)\equiv(L,w,s)\) (and hence it is possible to rephrase
\cite[Theorem 5.1.7]{JK23} as an equivalence): in case $e=0$, the field
$(\mathbb{F}_p(t)^\textrm{h})^\mathrm{perf} \cong
    (\mathbb{F}_p(s)^\textrm{h})^\mathrm{perf}$ is a common subfield of both
\((K,v,t)\) and \((L,w,s)\) which is also in $\C_{p,0}$, and so the elementary
equivalence can be strengthened to include a symbol for the constants \(t\) or
\(s\) respectively. Whether this works in greater generality remains open:

\begin{Qu}
Can \Cref{thm:ake-equiv} be strengthened to an equivalence in general? More precisely: 
Given $p$ prime
and $e \in \mathbb{N} \cup \{\infty\}$, $e>0$, and two pointed valued fields
$(K,v,t)$ and $(L,w,t)$ in $\mathcal{C}_{p,e}$. Does the equivalence
$$((vK,v(t))\equiv (wL,w(s)) \textrm{ and } \O_v/(t) \equiv \O_w/(s)) \Longleftrightarrow (K,v,t) \equiv (L,w,s)$$ hold?
\end{Qu}
    
\subsection*{Existential closedness}
Last but not least, we obtain an AKE principle transferring existential
closedness.
\begin{thm}
    Let \((K,v)\subseteq(L,w)\) be a separable inclusion of deeply ramified
    valued fields in a fixed class \(\C_{p,e}\). Then the following statements
    are equivalent:
    \begin{enumerate}
        \item \((K,v)\preceq_{\exists}(L,w)\),
        \item \(vK\preceq_{\exists}wL\) and
              \(\O_{v}/(t)\preceq_{\exists}\O_{w}/(t)\).
    \end{enumerate}

    \begin{proof} This proof is similar to that of
        \Cref{thm:ake}. The implication (1) $\implies$ (2) is clear. We now prove (2)
        $\implies$ (1). By \Cref{cor:elem}, we may assume that \(L\) is saturated of size
        \(|{K}^{+}|\).

        Then can find a sufficiently saturated ultrapower \((K^{*},v^{*})\) of
        \((K,v)\) such that we have a (non-elementary) embedding \(\O_{w}/(t)\to
        \O_{v^{*}}/(t)\) and \(wL\to v^{*}K^{*}\) over \((K,v)\). Then the reduced
        ring \((\O_{w}/(t))_{\text{red}}\) embeds into
        \((\O_{v^{*}}/(t))_{\text{red}}\) over \((\O_{v}/(t))_{\text{red}}\) as
        well. This is exactly an embedding of valued fields
        \((Lw_{t^{-}},\overline{w})\to(K^{*}v^{*}_{t^{-}},\overline{v^{*}})\) over
        \((K,v)\).
        \[
            \begin{tikzcd}
                (Kv_{t^{-}},\overline{v}) & (Lw_{t^{-}},\overline{w})         \\
                & (K^{*}v^{*}_{t^{-}},\overline{v^{*}})
                \arrow[from=1-1,to=1-2]
                \arrow[from=1-1,to=2-2]
                \arrow[from=1-2,to=2-2]
            \end{tikzcd}
        \]
        As before, the valued field \((Lv_{t^{+}},\overline{v_{t^{-}}})\) is tame by
        saturation so the composition of valuations \((L,v_{t^{-}})\) is separably
        tame.

        Now, we can use the separable relative embedding property \cite[Theorem 4.19]{A25} to embed
        \((L,w_{t^{-}})\) into \((K^{*},v^{*}_{t^{-}})\) over \((K,v_{t^{-}})\). To see
        this note that by assumption \(L/K\) is separable and the residue fields of
        all non-trivial coarsenings are perfect because the valued fields are
        deeply ramified. The value group extension is torsion free because we
        assumed that \(vK\preceq_{\exists}vL\).

        We then obtain a commutative diagram using the separable relative
        embedding property
        \[
            \begin{tikzcd}
                (K,v_{t^{-}}) & (L,w_{t^{-}})         \\
                & (K^{*},v^{*}_{t^{-}})
                \arrow[from=1-1,to=1-2]
                \arrow[from=1-1,to=2-2]
                \arrow[from=1-2,to=2-2]
            \end{tikzcd}
        \]
        The map \(K\to K^{*}\) is induced by the map \(O_{w}/(t)\to\O_{v^{*}}/(t)\),
        hence we get that this is an embedding with respect to the total
        valuations \(v,w,v^{*}\). Now because \((K,v)\to(K^{*},v^{*})\) is
        elementary by construction, we can conclude \((K,v)\preceq_{\exists}(L,w)\).
    \end{proof}
\end{thm}

	\printbibliography
	
	
	
	

    \todos
\end{document}